\numberwithin{equation}{section}
\newtheorem{theorem}{Theorem}[section]
\newtheorem{lemma}[theorem]{Lemma}
\newtheorem{question}[theorem]{Question}
\newtheorem{claim}[theorem]{Claim}
\theoremstyle{remark}
\newtheorem{remark}[theorem]{Remark}
\begin{document}

\title{Raimi's theorem for the $n$-dimensional torus}
\author{
\textsc{Hunseok Kang}$^{1}$%
\thanks{%
$^{1}$College of Engineering and Technology, American University of the Middle East, Kuwait.\newline
\hspace*{0.6cm} Email: \texttt{hunseok.kang@aum.edu.kw}}%
\and
\textsc{Doowon Koh}$^{2}$%
\thanks{%
$^{2}$Chungbuk National University, Cheongju, Chungbuk 28644, Korea.
Email: \texttt{koh131@chungbuk.ac.kr}}%
\and
\textsc{Dung The Tran}$^{3}$%
\thanks{%
$^{3}$Vietnam National University, University of Science, Hanoi, Vietnam.
Email: \texttt{tranthedung56@gmail.com}}%
}

\date{}
\maketitle
\begin{abstract}
We extend Raimi's classical partition theorem to the continuous setting of the circle and $n$-dimensional torus. Building on recent work of Hegyvári, Pach, and Pham in finite groups, we prove that there exist measurable partitions of the $n$-dimensional torus $\mathbb{T}^n$ with the property that for any finite measurable cover, some translated part of the cover has positive measure intersection with every partition element. Our proof adapts combinatorial arguments from the finite setting using measure-theoretic techniques and slicing arguments in product spaces.
\end{abstract}


\section{Introduction}
A classical theorem of Raimi \cite{Raimi} says that there exists a partition of $\mathbb{N}$, $\mathbb{N}=E_1\cup E_2$, such that for every partition of $\mathbb{N}$ into finitely many parts, $\mathbb{N}=\bigcup\limits_{j=1}^t F_j$ with $t\in\mathbb{N},$ we can always find $j\in\{1,2,\ldots,t\}$ and $k\in \mathbb{N}$ such that both $(F_j+k)\cap E_1$ and $(F_j+k)\cap E_2$ are infinite sets. This result was proved by using a topological method. Hindman \cite[p.180, Theorem 11.15]{HM79} later provided an elementary proof and showed that one can choose $E_1$ to be the set of natural numbers whose last non-zero entry in their ternary expansion is $1$ and $E_2=\mathbb{N}\setminus E_1$. 

Strengthened analogs specifying the densities of the partition sets or guaranteeing positive densities in the conclusion can be found in the work of Hegyv\'{a}ri \cite{NH}, or the work of Bergelson and Weiss \cite{Bergelson2}.

In a recent remarkable paper, Hegyvári, Pach, and Pham \cite{HPP} developed a framework that combines tools from harmonic analysis, additive combinatorics, and group theory to obtain polynomial and finite-group extensions of Raimi's theorem. They also made explicit its link to Ramsey theory: \textit{while classical Ramsey results ask ``what patterns are forced by density or by finite colorings?'', Raimi's perspective asks ``which partitions are unavoidable in any finite coloring after an appropriate shift?''}. One of their main theorems is the following.

\begin{theorem}[Hegyv\'{a}ri--Pach--Pham {\cite{HPP}}]\label{thmpolynomial-main}
Let $r, t, k, f\in \mathbb{N}$. Let $P^{(1)},\dots,P^{(f)}\in\mathbb{Z}[x]$ be non-constant polynomials with the properties that
\[
P^{(1)}(0)=\cdots=P^{(f)}(0)=0
\]
and the leading coefficients are positive.

There exists a partition
\[
\mathbb{N}^k = \bigcup_{i=1}^r E_i
\]
such that for any partition
\[
\mathbb{N}^k = \bigcup_{j=1}^t F_j,
\]
there exist \(m\in\{1,\dots,t\}\), an element $x_0\in \mathbb{N}$, and a set $H\subset\mathbb{N}$ of positive lower density with the property that for every $h\in H$ and every $j\in\{1,\dots,f\}$, the set
\[
\big(F_m+(x_0+P^{(j)}(h))\mathbf{1}_k\big)\cap E_i
\]
is infinite, where $\mathbf{1}_k=(1, \ldots, 1)\in \mathbb{N}^k$.
\end{theorem}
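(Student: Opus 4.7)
The plan is to construct the partition $\{E_i\}_{i=1}^r$ by a diagonal lift of Hindman's last-non-zero-digit partition of $\mathbb{N}$, then reduce the $k$-dimensional polynomial statement to a one-dimensional configuration along the diagonal $\mathbb{N}\mathbf{1}_k$, and finally extract the pair $(x_0,H)$ from the polynomial Szemer\'edi theorem of Bergelson and Leibman.

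First I would define, for $k=1$, the set $E_i$ to be the natural numbers whose last non-zero digit in base $r+1$ equals $i$; this is the $r$-class generalization of Hindman's partition used in his elementary proof of Raimi's theorem. Then I would lift to $\mathbb{N}^k$ by declaring $\mathbf{n}\in E_i$ whenever the first coordinate $n_1$ belongs to the one-dimensional class $i$. The essential property inherited from the construction is that, for any infinite set $S\subset\mathbb{N}$ and any fixed shift $c\in\mathbb{N}$, the translated diagonal set $\{(n+c)\mathbf{1}_k:n\in S\}$ intersects every $E_i$ in an infinite set.

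Next, given a finite cover $\mathbb{N}^k=\bigcup_{j=1}^t F_j$, I would pull back to the diagonal by setting $G_j:=\{n\in\mathbb{N}:n\mathbf{1}_k\in F_j\}$, so $\mathbb{N}=\bigcup_{j=1}^t G_j$; by pigeonhole some $G_m$ has positive upper Banach density. Applying the polynomial Szemer\'edi theorem to $G_m$ with $P^{(1)},\dots,P^{(f)}$ (which all vanish at $0$) yields a set of pairs $(x_0,h)\in\mathbb{N}^2$ of positive two-dimensional density such that $x_0,\,x_0+P^{(1)}(h),\dots,x_0+P^{(f)}(h)\in G_m$. A Fubini-type slicing then produces a single $x_0\in\mathbb{N}$ for which the slice $H:=\{h\in\mathbb{N}:x_0+P^{(j)}(h)\in G_m\text{ for every }j\}$ has positive lower density in $\mathbb{N}$. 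For any such $h$ and $j$, writing $c=x_0+P^{(j)}(h)$, the point $c\mathbf{1}_k$ lies in $F_m$, so $F_m+c\mathbf{1}_k$ contains the infinite diagonal set $\{(n+c)\mathbf{1}_k:n\in G_m\}$, which by the construction of the first step meets every $E_i$ infinitely often.

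The hard part will be ensuring that a \emph{single} shift $x_0$ works simultaneously for a positive-density set of $h$ \emph{and} for every $j\in\{1,\dots,f\}$, rather than an $x_0$ depending on $h$ or $j$. This is where the strength of the Bergelson--Leibman theorem for finite polynomial families must be combined with the Fubini-slicing trick to fix $x_0$ uniformly. A secondary technicality is arranging the partition $\{E_i\}$ so that the infiniteness of the intersection persists under every diagonal translation, which is exactly the robustness property driving Hindman's elementary proof of the classical Raimi theorem.
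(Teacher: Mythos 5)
This theorem is not proved in the present paper; it is Theorem~\ref{thmpolynomial-main}, quoted from Hegyv\'ari--Pach--Pham \cite{HPP} as background. So there is no ``paper's own proof'' to compare against. I will therefore evaluate your argument on its own terms, and there is a genuine gap.

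The ``essential property'' on which your whole reduction hinges is false. You assert that for \emph{any} infinite set $S\subset\mathbb{N}$ and \emph{any} fixed shift $c$, the translated diagonal set $\{(n+c)\mathbf{1}_k:n\in S\}$ meets every $E_i$ in an infinite set. But with the Hindman-style partition (last nonzero base-$(r+1)$ digit), take $S\subset E_1$ and $c=0$: then every point of the diagonal lies in $E_1$ and none in $E_2,\dots,E_r$. The whole point of Raimi's theorem, already in the classical $k=1$ linear case, is that the shift must be \emph{chosen as a function of $F_m$}; Hindman's elementary proof builds this shift from the base-$(r+1)$ structure of $F_m$. Your proposal instead takes the shift $c=x_0+P^{(j)}(h)$ from Bergelson--Leibman, which reflects only the arithmetic of $G_m$ and has no control over how $G_m+c$ sits relative to the digit classes.

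The problem persists even with the positive-density hypothesis that your pigeonhole step actually delivers. Concretely, take $k=1$, $r=2$ (base $3$), $f=1$, $P^{(1)}(x)=x$, and the cover $F_1=\{3m+1:m\ge0\}$, $F_2=\mathbb{N}\setminus F_1$. If the pigeonhole selects $m=1$, then $G_1=\{3m+1\}$ has density $1/3$, and applying polynomial Szemer\'edi forces $x_0\equiv 1\pmod 3$ and $h\equiv 0\pmod 3$ for $h\in H$, hence $c=x_0+h\equiv 1\pmod 3$. But then
\[
G_1+c\subset\{3m+2:m\ge0\}\subset E_2,
\]
so $(G_1+c)\cap E_1=\emptyset$ for \emph{every} $h\in H$, and the conclusion fails outright. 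Your argument as written picks an arbitrary $m$ with positive density and has no mechanism to rule out this bad choice; more seriously, by refining the residue class used in the cover one sees that simply ``choosing a better $m$'' is not a principled fix. What is missing is an interaction between the choice of $x_0$ (or of the partition itself) and the base-$(r+1)$ structure of $G_m$, which is precisely the combinatorial content of the actual HPP argument. As written, the reduction-to-the-diagonal plus polynomial Szemer\'edi strategy only proves a much weaker statement and does not establish Theorem~\ref{thmpolynomial-main}.
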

It follows from their result that polynomial patterns are also unavoidable, which is parallel to Bergelson and Leibman’s polynomial Szemer\'{e}di theorem \cite{BL} in Ramsey theory.

In the finite setting, Hegyv\'{a}ri, Pach, and Pham \cite{HPP} proved extensions in both abelian and non-abelian groups. Here, we recall their result for abelian groups which is relevant to our main results and techniques.

\begin{theorem}[Hegyv\'{a}ri--Pach--Pham {\cite{HPP}}]\label{thm3.1}
Let $r, t\in \mathbb{N}$. Let $G$ be a finite cyclic group of order $N$, written additively. There are constants \(\alpha=\alpha(r, t)>0\) and \(N_0=N_0(r,t)\) such that the following holds for every \(N\ge N_0\). There exists a partition
\[
G = \bigcup_{i=1}^r E_i
\]
such that for each partition
\[
G = \bigcup_{j=1}^t F_j,
\]
there exist \(m\in\{1,\dots,t\}\) and an element $h\in G$ such that
\[
\forall i\in\{1,2,\ldots,r\},\quad |(F_m+h)\cap E_i|\ge \alpha |G|.
\]
In general, if $G$ is an abelian group with exponent at least $N_0$, then a conclusion with a weaker constant holds, namely,
\[
\forall i\in\{1,2,\ldots,r\},\quad |(F_m+h)\cap E_i|\ge \frac{\alpha}{t} |G|.
\]
\end{theorem}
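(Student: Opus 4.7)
The plan is to prove the cyclic case by a probabilistic construction of the partition $E_1,\dots,E_r$ together with a Fourier/variance argument, and then to reduce the general abelian case to the cyclic case via a coset decomposition of $G$ with respect to a large cyclic subgroup. The advertised loss of a factor $1/t$ in the abelian conclusion comes from an unavoidable double pigeonhole across cosets.

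For the cyclic case $G=\mathbb{Z}/N\mathbb{Z}$, I would color each element of $G$ independently by one of $r$ equally likely colors and set $E_i$ to be the elements of color $i$. Two structural properties of this random partition hold with high probability once $N\ge N_0(r,t)$: (i) $\bigl||E_i|-N/r\bigr|$ is small, by Chernoff; and (ii) the Fourier pseudorandomness bound $\max_{\xi\ne 0}\bigl|\widehat{\mathbf{1}_{E_i}}(\xi)\bigr|\le C\sqrt{N\log N}$ holds for every $i$. Property (ii) follows by Hoeffding applied to the zero-mean complex sum $\sum_x(\mathbf{1}_{E_i}(x)-1/r)\chi_\xi(x)$ for each non-trivial character $\chi_\xi$, followed by a union bound over the $rN$ pairs $(i,\xi)$. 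By the probabilistic method, a deterministic partition with both properties exists.

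Now fix such a partition. Given any $t$-partition $G=\bigcup_j F_j$, pigeonhole supplies an index $m$ with $|F_m|\ge N/t$. Let $\Phi_i(h):=|(F_m+h)\cap E_i|$; the mean of $\Phi_i$ over $h\in G$ is $|F_m||E_i|/N\ge N/(2rt)$, while Parseval combined with property (ii) gives
\[
\mathrm{Var}_h(\Phi_i) = \frac{1}{N^2}\sum_{\xi\ne 0}\bigl|\widehat{\mathbf{1}_{E_i}}(\xi)\bigr|^2 \bigl|\widehat{\mathbf{1}_{F_m}}(\xi)\bigr|^2 \le \frac{|F_m|}{N}\max_{\xi\ne 0}\bigl|\widehat{\mathbf{1}_{E_i}}(\xi)\bigr|^2 \le C^2 |F_m|\log N.
\]
Applying Chebyshev's inequality to each $\Phi_i$ separately bounds the number of ``bad'' shifts where $\Phi_i(h)<\text{mean}/2$ by $O(r^2 t\log N)$; for $N\ge N_0(r,t)$ this is less than $N/r$, so a union bound over $i\in\{1,\dots,r\}$ produces a single shift $h\in G$ at which $\Phi_i(h)\ge \alpha N$ for all $i$ simultaneously, giving the cyclic conclusion with $\alpha(r,t)=1/(4rt)$.

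For a general abelian group $G$ of exponent $\ge N_0$, I would pick a cyclic subgroup $H\le G$ of order $\ge N_0$ and decompose $G=\bigsqcup_k(g_k+H)$ into its $K=|G|/|H|$ cosets, defining $E_1,\dots,E_r$ by applying the cyclic construction independently inside each coset. Given a $t$-partition of $G$, for each coset $k$ pigeonhole over $j$ yields an index $m_k$ with $|F_{m_k}\cap(g_k+H)|\ge |H|/t$; a second pigeonhole over $(k,m_k)$ produces a single $m$ and a family $K_m$ of at least $K/t$ cosets on which $F_m$ is large. Inside each coset in $K_m$ the cyclic variance bound shows the set of shifts $h\in H$ failing the conclusion has size $o(|H|)$; averaging over $h\in H$ then yields a single $h$ that succeeds on all but an $\varepsilon$-fraction of $K_m$, and summing the coset-wise contributions gives $|(F_m+h)\cap E_i|\ge (1-\varepsilon)\alpha(r,t)|K_m||H| \ge \alpha(r,t)|G|/(2t)$, which is the claimed weaker bound. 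The main obstacle throughout is quantitative: keeping $N_0(r,t)$ polynomial in $rt$ requires tracking the $\log N$ factor in the Fourier estimate, and the abelian reduction must verify that coupling of shifts across cosets does not introduce losses beyond the stated $1/t$.
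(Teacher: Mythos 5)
First, a caveat about what the paper actually contains: Theorem~\ref{thm3.1} is imported from Hegyv\'ari--Pach--Pham and is not proved in this paper. However, the paper's proof of Theorem~\ref{thm:circle-raimi1} is explicitly described as adapting HPP's proof of Theorem~\ref{thm3.1}, so one can read that argument as a faithful continuous transcription of the finite cyclic proof. With that in mind, your proposal takes a genuinely different route. The paper's (and hence HPP's) argument is \emph{deterministic and combinatorial}: the partition $E_1,\dots,E_r$ consists of consecutive intervals with geometrically decaying lengths $\Delta_i = \Delta_1/k^{i-1}$, and a single good shift is built iteratively, subdividing $E_s$ into $k$ equal blocks, selecting the \emph{rightmost} block carrying a nontrivial fraction of $R_{\phi_s}(F_m)$, and shifting it onto $E_{s+1}$ while bounding the mass lost from $E_s$. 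The abelian extension is then a slicing argument over a cyclic direct factor: slice $F_m$ over the cyclic coordinate, pigeonhole fiber measures, build a $t$-partition of the cyclic factor, and invoke the cyclic theorem there. Your proposal instead uses a \emph{random partition plus Fourier/second-moment} argument: random $r$-coloring gives a pseudorandom partition (small nontrivial Fourier coefficients via Hoeffding plus a union bound), and then a Plancherel variance estimate combined with Chebyshev shows that almost every shift $h$ works, so a union bound over $i$ finds a common good $h$. Your abelian reduction slices over cosets of a large cyclic subgroup $H$ rather than over the cyclic coordinate, with a double pigeonhole over $(k,m_k)$. Both approaches establish the theorem, but they buy different things. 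The deterministic interval construction is explicit and translation-friendly, which is exactly why it adapts cleanly to the circle; the price is a constant $\alpha$ that decays super-exponentially in $r$ (the smallest block has measure $\sim k^{-(r-1)}$ with $k \gtrsim 2^{r}t$). Your probabilistic construction gives a much better, polynomial constant $\alpha \approx 1/(4rt)$ and shows that in fact almost all shifts work, at the cost of being non-constructive.

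One point to tighten in your abelian step: you say the cyclic construction is applied ``independently inside each coset.'' If you literally take independent random colorings on the $K = |G|/|H|$ cosets, the union bound for the Fourier pseudorandomness property now runs over roughly $K r |H|$ events, and since $K$ can be arbitrarily large relative to $|H|$, the high-probability bound per coset (which is only polynomially small in $|H|$) is not strong enough to survive. The fix is simple and brings you closer to the product structure of the paper's slicing argument: fix a single cyclic partition $E^H_1,\dots,E^H_r$ of $H$ with the required Fourier property and translate it identically onto each coset, i.e.\ set $E_i \cap (g_k + H) = g_k + E^H_i$. Then the coset-wise variance estimate holds uniformly, and your averaging over $h \in H$ goes through unchanged. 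With that adjustment (and a small constant slack such as $(1-2\varepsilon)\alpha/t$ in place of $\alpha/t$, which is harmless since $\alpha$ is unspecified) the proposal is correct.
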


A natural question is whether the phenomenon discovered by Hegyvári, Pach, and Pham persists in compact abelian groups. In this paper, we answer this question affirmatively for the circle and, more generally, for the $n$-dimensional torus.

Let $C$ denote the unit circle, identified with the compact abelian group $\mathbb{R}/\mathbb{Z}$, and let $\mu_1$ be the normalized Haar measure on $C$ satisfying $\mu_1(C)=1$. For $\theta \in C$, define the rotation
\[
R_{\theta}(x) := x + \theta \pmod{1}.
\]

Our first result extends Theorem \ref{thm3.1} to the circle $C\cong \mathbb{T}$.

\begin{theorem}\label{thm:circle-raimi1}
Let $r, t\ge 2$ be numbers in $\mathbb{N}$. There exists a measurable partition
\[
C = \bigcup_{i=1}^r E_i
\]
such that for every finite measurable cover
\[
C \subset F_1 \cup \cdots \cup F_t,
\]
there exist an index $m \in \{1,\dots,t\}$ and a rotation $R_{\theta}$ satisfying
\begin{align*}
    \mu_1\big(R_{\theta}(F_m) \cap E_i\big) > 0
\quad\text{for all } 1\le i\le r.
\end{align*}
\end{theorem}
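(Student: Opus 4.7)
The plan is to transfer Theorem \ref{thm3.1} from the finite cyclic group to the circle by means of a sufficiently fine equi-spaced discretization. First I would fix $N \ge N_0(r,t)$, slice $C$ into the $N$ half-open arcs $I_k := [k/N,(k+1)/N)$ indexed by $k \in \mathbb{Z}/N\mathbb{Z}$, apply Theorem \ref{thm3.1} to the cyclic group $\mathbb{Z}/N\mathbb{Z}$ to obtain a partition $\mathbb{Z}/N\mathbb{Z} = A_1 \cup \cdots \cup A_r$, and declare
\[
E_i \;:=\; \bigcup_{k \in A_i} I_k, \qquad i = 1,\dots,r.
\]
The decisive feature of this construction is that the rotation $R_{h/N}$, for $h \in \mathbb{Z}/N\mathbb{Z}$, permutes the arcs $I_k$ exactly, so the continuous action of the circle on $\{I_k\}$ matches the additive action of $\mathbb{Z}/N\mathbb{Z}$ on itself.

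Given an arbitrary measurable cover $C \subseteq F_1 \cup \cdots \cup F_t$, I would next extract a discrete partition of $\mathbb{Z}/N\mathbb{Z}$ to which Theorem \ref{thm3.1} can be applied. To this end, for each arc $I_k$ pick an index $j(k) \in \{1,\dots,t\}$ maximizing $\mu_1(F_j \cap I_k)$; since $\sum_{j=1}^t \mu_1(F_j \cap I_k) \ge \mu_1(I_k) = 1/N$, pigeonhole gives the lower bound
\[
\mu_1\bigl(F_{j(k)} \cap I_k\bigr) \;\ge\; \frac{1}{tN},
\]
which will be crucial in the last step. Setting $B_j := \{k : j(k) = j\}$ yields a partition $\mathbb{Z}/N\mathbb{Z} = B_1 \cup \cdots \cup B_t$, so Theorem \ref{thm3.1} provides $m \in \{1,\dots,t\}$ and $h \in \mathbb{Z}/N\mathbb{Z}$ with $|(B_m + h) \cap A_i| \ge \alpha N$ for every $i$.

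Finally I would take $\theta := h/N$ and read off the conclusion. For each $k \in B_m$ with $k + h \in A_i$, the identity $R_\theta(I_k) = I_{k+h} \subseteq E_i$ implies $R_\theta(F_m \cap I_k) \subseteq R_\theta(F_m) \cap E_i$, so disjointness of the arcs gives
\[
\mu_1\bigl(R_\theta(F_m) \cap E_i\bigr) \;\ge\; \sum_{\substack{k \in B_m \\ k+h \in A_i}} \mu_1(F_m \cap I_k) \;\ge\; \frac{|(B_m+h) \cap A_i|}{tN} \;\ge\; \frac{\alpha}{t} \;>\; 0,
\]
which is the required lower bound for all $i \in \{1,\dots,r\}$ simultaneously.

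The main point to watch, rather than a genuine obstacle, is that the partition $\{E_i\}$ must be declared \emph{before} the cover $\{F_j\}$; this is handled by the uniformity of $N_0(r,t)$ and $\alpha(r,t)$ in Theorem \ref{thm3.1}, which lets us fix $N$ once and for all. Everything else is essentially bookkeeping: using half-open arcs so that $R_{h/N}$ is an exact permutation, using the cyclic form of Theorem \ref{thm3.1} (so that $\alpha N$ rather than $\alpha N/t$ is available), and combining the pigeonhole lower bound $1/(tN)$ with the HPP count $\alpha N$ to absorb the normalization $N$. No product-space slicing is required for the one-dimensional result; that device will only enter when extending this argument to $\mathbb{T}^n$.
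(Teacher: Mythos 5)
Your proof is correct, but it takes a genuinely different route from the paper. You transfer Theorem~\ref{thm3.1} to the circle by an equi-spaced discretization: fix $N\ge N_0(r,t)$, identify the $N$ half-open arcs $I_k=[k/N,(k+1)/N)$ with $\mathbb Z/N\mathbb Z$, pull back HPP's partition $\{A_i\}$ to $E_i=\bigcup_{k\in A_i}I_k$, and for any cover $\{F_j\}$ build the auxiliary partition $B_j=\{k:j(k)=j\}$ by assigning each arc to its dominant cover element. The pigeonhole bound $\mu_1(F_{j(k)}\cap I_k)\ge 1/(tN)$ combined with $|(B_m+h)\cap A_i|\ge\alpha N$ and the exact permutation $R_{h/N}(I_k)=I_{k+h}$ yields $\mu_1(R_{h/N}(F_m)\cap E_i)\ge\alpha/t$ for all $i$, which is more than what is asked. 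The paper instead proceeds \emph{de novo}: it constructs an explicit partition of $[0,1)$ into $r$ geometrically shrinking intervals of lengths $\Delta_1>\Delta_1/k>\cdots>\Delta_1/k^{r-1}$, establishes the continuity and averaging identity $\int_C\mu_1(R_\theta(F)\cap E)\,d\mu_1(\theta)=\mu_1(E)\mu_1(F)$ (Lemma~\ref{lem:single-set}), and then runs an $r$-step iterative rotation argument (Claim~\ref{cl:Es-lower}) that carries mass of $R_{\phi_s}(F_m)$ from $E_s$ into $E_{s+1}$ while controlling the mass bled out of the earlier cells; the geometric decay $\Delta_{s+1}=\Delta_s/k$ with $k$ large absorbs the successive losses. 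Your transference argument is shorter and, as a bonus, delivers a uniform quantitative lower bound $\alpha(r,t)/t$ independent of $i$ and of the cover (partially answering the paper's second open question in Section~\ref{section4}), but it treats Theorem~\ref{thm3.1} as a black box. The paper's argument is self-contained, produces a very concrete partition into intervals, and — as the authors emphasize — directly adapts the finite combinatorial mechanism to the continuous setting, which sets up the slicing argument for $\mathbb T^n$ and points toward the polynomial extensions they raise as further questions. Both are valid; they trade self-containment for brevity and quantitative sharpness.
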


 We next extend this result to higher-dimensional tori. 
Let $C = \mathbb{R}/\mathbb{Z}$ denote the circle with its Haar probability
measure $\mu_1$, and let $\mathbb{T}^{n-1}$ be the $(n-1)$–dimensional torus
with Haar probability measure $\mu_{n-1}$. 
We identify
\[
\mathbb{T}^n \cong C \times \mathbb{T}^{n-1}
\]
and write $\mu_n = \mu_1 \times \mu_{n-1}$ for the Haar probability measure
on $\mathbb{T}^n$. 
For $\theta \in \mathbb{T}^n$, we denote by $R_\theta$ the translation
$x \mapsto x + \theta$ on $\mathbb{T}^n$.


\begin{theorem}\label{thm:general-case}
  Let $r$, $t\ge 2$ be numbers in $\mathbb{N}$. Then there exists a measurable partition
\[
\mathbb{T}^n = \bigcup_{i=1}^r E_i
\]
such that for every finite measurable cover $$
\mathbb{T}^n \subset F_1 \cup \cdots \cup F_t,$$ there exist an index $m \in \{1,\dots,t\}$ and a translation $R_{\theta}$ satisfying
\begin{align*}
    \mu_n\big(R_{\theta}(F_m) \cap E_i\big) > 0
\quad\text{for all } 1\le i\le r.
\end{align*}
\end{theorem}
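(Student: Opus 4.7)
The plan is to reduce the $n$-dimensional statement to the circle case (Theorem \ref{thm:circle-raimi1}) by a slicing argument in the product $\mathbb{T}^n \cong C \times \mathbb{T}^{n-1}$. Writing a generic point of $\mathbb{T}^n$ as $(x,y)$ with $x\in C$ and $y\in\mathbb{T}^{n-1}$, I would take the partition $C=\bigcup_{i=1}^r E_i^{(1)}$ furnished by Theorem \ref{thm:circle-raimi1} and simply lift it to $\mathbb{T}^n$ by
\[
E_i := E_i^{(1)} \times \mathbb{T}^{n-1}, \qquad 1\le i\le r,
\]
which is a measurable partition of $\mathbb{T}^n$. The key observation is that even though this lifted partition only ``sees'' the first coordinate, the extra $\mathbb{T}^{n-1}$ factor interacts with the cover $F_1\cup\cdots\cup F_t$ only through non-negative weights, which is harmless for the conclusion we want.

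For an arbitrary measurable cover $\mathbb{T}^n\subset F_1\cup\cdots\cup F_t$, I would project it quantitatively onto $C$ by setting
\[
g_m(x) := \mu_{n-1}\bigl(\{y\in\mathbb{T}^{n-1} : (x,y)\in F_m\}\bigr),\qquad S_m := \{x\in C : g_m(x)>0\}.
\]
Tonelli's theorem makes each $g_m$ measurable and hence each $S_m\subset C$ measurable. For almost every $x$ the fibers $\{y:(x,y)\in F_m\}_{m=1}^t$ cover $\mathbb{T}^{n-1}$, so $\sum_m g_m(x)\ge 1$ and at least one $g_m(x)$ is positive; thus $S_1\cup\cdots\cup S_t=C$ modulo a null set, and after absorbing that null set into $S_1$, the family $(S_m)$ becomes a genuine measurable cover of $C$.

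Applying Theorem \ref{thm:circle-raimi1} to this cover with the partition $(E_i^{(1)})$ yields an index $m\in\{1,\dots,t\}$ and an angle $\theta\in C$ such that
\[
\mu_1\bigl(S_m\cap(E_i^{(1)}-\theta)\bigr) > 0 \qquad\text{for every } 1\le i\le r.
\]
Setting $\theta^* := (\theta,0,\dots,0)\in\mathbb{T}^n$, I would finish by a Tonelli computation using translation invariance of $\mu_{n-1}$:
\[
\mu_n\bigl(R_{\theta^*}(F_m)\cap E_i\bigr) \;=\; \int_{E_i^{(1)}} g_m(x-\theta)\,d\mu_1(x) \;=\; \int_{E_i^{(1)}-\theta} g_m(u)\,d\mu_1(u).
\]
The integrand is strictly positive on $S_m$, and $(E_i^{(1)}-\theta)\cap S_m$ has positive $\mu_1$-measure, so the integral is strictly positive for every $i$ at once. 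Hence the single pair $(m,\theta^*)$ witnesses the desired conclusion across all $r$ partition elements.

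The main obstacle I expect is essentially bookkeeping rather than any new combinatorial difficulty: one must handle the measure-zero exceptional set where the $x$-slices of $F_m$ may fail to be measurable, and one must check that the manufactured cover $(S_m)$ satisfies the precise hypotheses of Theorem \ref{thm:circle-raimi1}. Once these points are settled, no induction on $n$ is required — the one-step reduction to the circle absorbs the full higher-dimensional content into the weight functions $g_m$, and the circle theorem is strong enough to digest the resulting weighted cover.
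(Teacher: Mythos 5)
Your proof is correct and follows essentially the same slicing strategy as the paper: lift the circle partition to $\mathbb{T}^n$ by crossing with $\mathbb{T}^{n-1}$, collapse the cover of $\mathbb{T}^n$ to a cover of the circle using fiber measures, invoke Theorem~\ref{thm:circle-raimi1} on that circle cover, and then unwind via Fubini. The only difference is in how you manufacture the auxiliary circle cover: you take $S_m := \{x : g_m(x) > 0\}$, whereas the paper uses pigeonhole to set $C_m := \{x : m(x)=m\}$ where $m(x)$ is the minimal index with $\mu_{n-1}(A_m(x)) \ge 1/t$, which yields a genuine \emph{partition} of $C$ together with the explicit quantitative lower bound $\mu_n(R_\theta(F_{m^*}) \cap \bar E_i) \ge \tfrac1t \mu_1(R_{\theta_1}(C_{m^*})\cap E_i)$. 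Your version is marginally leaner since Theorem~\ref{thm:circle-raimi1} only needs a cover, not a partition, and only positivity is required in the end; the paper's version has the advantage of producing a quantitative bound essentially for free. Both are fully valid, and the gap you flag (null-set bookkeeping for slice measurability) is indeed the only care point and is handled exactly as you propose.
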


Extending Raimi-type results to the continuous setting serves several purposes. Firstly, the tori $\mathbb{T}^n$ arise naturally as limit objects for finite cyclic groups and for subsets of the integers, so Theorems~\ref{thm:circle-raimi1} and~\ref{thm:general-case} provide genuine continuous counterparts to Theorem~\ref{thm3.1}. Secondly, from the viewpoint of ergodic theory, our results reveal rigidity properties of measure-preserving transformations: they exhibit unavoidable partition structures that persist under rotations, which are among the most fundamental examples in dynamics. Finally, the continuous setting brings analytic features that are absent in the finite case, such as continuity properties of the map $\theta \mapsto \mu_n(R_\theta(F)\cap E)$ and slicing arguments on product tori. These tools are essential in our proofs and suggest that further extensions (for instance to polynomial or non-abelian actions on compact groups) may be more naturally studied in the compact, continuous framework.

We briefly comment on the relationship between Theorems \ref{thm:circle-raimi1} and \ref{thm:general-case}. When $n=1$, the $n$-dimensional torus $\mathbb{T}^n$ reduces to the circle $\mathbb{T} \cong C$, and in this case a translation in $\mathbb{T}$ coincides with a rotation on $C$. Thus, Theorem \ref{thm:circle-raimi1} may be viewed as the one-dimensional case of Theorem \ref{thm:general-case}. However, we present both results separately to emphasize the different proof strategies: the direct construction for the circle serves as a foundation for the product-space argument needed in higher dimensions.

The proof of Theorem \ref{thm:circle-raimi1} adapts the elegant combinatorial argument from Hegyv\'{a}ri, Pach, and Pham's proof of Theorem \ref{thm3.1} to the continuous setting of the circle. This adaptation requires establishing some basic properties of rotation–intersection measures (Lemma \ref{lem:single-set}), which follow from standard measure-theoretic arguments. The main technical content involves carefully controlling the measure loss through successive rotations. The proof of Theorem \ref{thm:general-case} builds on Theorem \ref{thm:circle-raimi1} by employing a slicing argument in product spaces, mirroring Hegyv\'{a}ri, Pach, and Pham's approach for general abelian groups in the finite setting.

The paper is organized as follows. In Section \ref{section2}, we prove Theorem \ref{thm:circle-raimi1} for the circle by constructing an explicit partition and adapting the discrete combinatorial argument. Section \ref{section3} extends this to the $n$-dimensional torus using a slicing technique. Section \ref{section4} contains concluding remarks and open questions.


\section{Proof of Theorem \texorpdfstring{\ref{thm:circle-raimi1}}{1.4}}\label{section2}
The proof proceeds in three main steps: Measure-theoretic preliminaries (Lemma~\ref{lem:single-set}), explicit construction of the partition, and iterative rotation argument (Claim~\ref{cl:Es-lower}).

We begin by establishing the basic analytic properties of rotation--intersection measures that will be used throughout the proof. The following lemma shows that the function measuring how much a rotated set intersects a fixed set behaves well: it is continuous and satisfies a natural averaging property.

\begin{lemma}\label{lem:single-set}
Let $E,F \subset C$ be measurable sets with $0 < \mu_1(E) < 1$ and $0 < \mu_1(F) \le 1$. Define
\[
f(\theta) := \mu_1\big(R_{\theta}(F) \cap E\big),\qquad \theta\in C.
\]
Then:
\begin{itemize}
    \item[(i)] The function $f$ is continuous on $C$.
    \item[(ii)] One has
    \[
    \int_C f(\theta)\,d\mu_1(\theta) = \mu_1(E)\mu_1(F).
    \]
    In particular, there exists $\theta_0\in C$ such that $f(\theta_0) = \mu_1(E)\mu_1(F)$.
    \item[(iii)] There exists $\theta\in C$ such that $0 < f(\theta) < \mu_1(F)$.
\end{itemize}
\end{lemma}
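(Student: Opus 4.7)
The three statements are standard measure-theoretic facts, and the plan is to handle them in the stated order, with each part feeding into the next.

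For part~(i), I would first rewrite $f(\theta) = \int_C \mathbf{1}_E(x)\,\mathbf{1}_F(x-\theta)\,d\mu_1(x)$ and then estimate $|f(\theta)-f(\theta')| \le \int_C |\mathbf{1}_F(x-\theta) - \mathbf{1}_F(x-\theta')|\,d\mu_1(x) = \mu_1\bigl(F \triangle R_{\theta'-\theta}(F)\bigr)$, using translation invariance of $\mu_1$. Continuity of $f$ then reduces to $L^1$-continuity of translation on $C$, a standard fact that one obtains by approximating $\mathbf{1}_F$ in $L^1$ by a continuous function and exploiting uniform continuity of the latter.

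For part~(ii), I would apply Fubini to the double integral $\int_C f(\theta)\,d\mu_1(\theta) = \int_C \mathbf{1}_E(x)\int_C \mathbf{1}_F(x-\theta)\,d\mu_1(\theta)\,d\mu_1(x)$, evaluate the inner integral using translation invariance and $\mu_1(C)=1$ to obtain $\mu_1(F)$, and then integrate in $x$ to obtain $\mu_1(E)\mu_1(F)$. The ``in particular'' clause follows from part~(i): since $f$ is continuous on the connected set $C$ with mean $\mu_1(E)\mu_1(F)$, either $f$ is constantly equal to this value, or its mean lies strictly between $\min_C f$ and $\max_C f$, and the intermediate value theorem then supplies the required $\theta_0$.

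For part~(iii), I would simply observe that the hypotheses $0 < \mu_1(E) < 1$ and $0 < \mu_1(F) \le 1$ force $0 < \mu_1(E)\mu_1(F) < \mu_1(F)$, so the point $\theta_0$ produced in part~(ii) automatically satisfies $0 < f(\theta_0) < \mu_1(F)$, giving the claim directly.

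I do not foresee any serious obstacle: all three parts are routine measure theory. The only mildly delicate point is the $L^1$-continuity of translation invoked in part~(i), but this is entirely standard on any compact abelian group.
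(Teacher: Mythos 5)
Your proof is correct and parts (i) and (ii) follow essentially the same route as the paper: for (i) both arguments reduce to $L^1$-continuity of translation via approximation of $\mathbf{1}_F$ by a continuous function (you phrase it through the symmetric difference $\mu_1(F\triangle R_{\theta'-\theta}(F))$, the paper writes out the uniform approximation $f_h\to f$ directly, but the substance is identical); for (ii) both use Fubini plus translation invariance, and you correctly supply the mean-value-theorem justification for the ``in particular'' clause that the paper leaves implicit.

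The one genuine divergence is part (iii). You observe that $0<\mu_1(E)<1$ and $0<\mu_1(F)$ already force $0<\mu_1(E)\mu_1(F)<\mu_1(F)$, so the point $\theta_0$ already produced in (ii) does the job with no further work. The paper instead argues by contradiction: it supposes $f$ takes values only in $\{0,\mu_1(F)\}$, uses continuity and connectedness of $C$ to conclude $f$ is constant, and then contradicts the integral identity. Both arguments are sound and rest on the same ingredients (continuity of $f$ and the mean value $\mu_1(E)\mu_1(F)$), but your version is more economical since it reuses the existence statement in (ii) directly rather than re-deriving a contradiction from the integral. The paper's contradiction argument would still work even if one dropped the ``in particular'' clause from (ii), which is presumably why it is phrased that way; your version makes the logical dependence on (ii) explicit and is arguably cleaner.
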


\begin{proof}
Define
\[
f(\theta) = \mu_1(R_{\theta}(F)\cap E)
= \int_C \mathbf{1}_E(x)\,\mathbf{1}_F(x - \theta)\,d\mu_1(x),
\]
where $\mathbf{1}_E$ and $\mathbf{1}_F$ denote the indicator functions of $E$ and $F$, respectively.

\medskip
{\it Proof of (i).} Fix $\varepsilon > 0$. Since continuous functions are dense in $L^1(C)$, there exists a continuous function $h: C \to \mathbb{R}$ such that
\begin{equation}\label{eq:approx-1F-h}
\|\mathbf{1}_F - h\|_{L^1(C)} < \varepsilon.
\end{equation}
Define the approximating function
\[
f_h(\theta) := \int_C \mathbf{1}_E(x)\,h(x - \theta)\,d\mu_1(x).
\]
For each fixed $x$, the map $\theta \mapsto h(x-\theta)$ is continuous, and the integrand is bounded by $\|\mathbf{1}_E\|_\infty \|h\|_\infty$. Thus, by dominated convergence, $f_h$ is continuous on $C$.

For each $\theta\in C$, using the change of variables $y=x-\theta$ and the rotation invariance of $\mu_1$, we have
\[
\begin{aligned}
|f(\theta) - f_h(\theta)|
&\leq \int_C \mathbf{1}_E(x)\,\big|\mathbf{1}_F(x-\theta) - h(x-\theta)\big|\,d\mu_1(x) \\
&\leq \int_C \big|\mathbf{1}_F(y) - h(y)\big|\,d\mu_1(y)
= \|\mathbf{1}_F - h\|_{L^1(C)} < \varepsilon.
\end{aligned}
\]
Thus, $\sup_{\theta\in C}|f(\theta)-f_h(\theta)|<\varepsilon$, so $f_h\to f$ uniformly. Since each $f_h$ is continuous, it follows that $f$ is continuous as well. This proves {\it (i)}.

{\it Proof of (ii).} By Fubini's theorem and rotation invariance of $\mu_1$,
\begin{align}\label{identity-f-mu-E-E}
\nonumber
\int_C f(\theta)\,d\mu_1(\theta)
=& \int_C \left(\int_C \mathbf{1}_E(x)\,\mathbf{1}_F(x - \theta)\,d\mu_1(x)\right)d\mu_1(\theta) \\ \nonumber
= &\int_C \mathbf{1}_E(x)\left(\int_C \mathbf{1}_F(x-\theta)\,d\mu_1(\theta)\right)d\mu_1(x) \\
=& \mu_1(E)\mu_1(F),
\end{align}
which proves {\it (ii)}.

{\it Proof of (iii).} We now prove {\it (iii)} by contradiction. Suppose that
\[
f(\theta) \in \{0,\mu_1(F)\} \quad \text{for all }\theta\in C.
\]
Since $f$ is continuous and $C$ is connected, the image set $f(C)$ must be a connected subset of $\{0,\mu_1(F)\}$, hence
\[
f(C) \subset \{0\} \quad \text{or} \quad f(C)\subset\{\mu_1(F)\}.
\]
We have 
\[
\int_{C} f(\theta) \, d\mu_1(\theta) = 
\begin{cases}
0 & \text{if } f \equiv 0, \\
\mu_1(F) & \text{if } f \equiv \mu_1(F),
\end{cases}
\]
which contradicts the identity \eqref{identity-f-mu-E-E} since $0<\mu_1(E)<1$. Thus, $f$ cannot take only the values $0$ and $\mu_1(F)$. Consequently, there exists $\theta\in C$ such that $ 0 < f(\theta) < \mu_1(F)$. This proves \it{(iii)}.
\end{proof}

\begin{remark}
For the applications below, we will use both the existence of $\theta$ with
\[
0 < \mu_1(R_{\theta}(F)\cap E) < \mu_1(F)
\]
and the existence of $\theta$ with
\[
\mu_1(R_{\theta}(F)\cap E) = \mu_1(E)\mu_1(F).
\]
Both statements follow from Lemma \ref{lem:single-set}.
\end{remark}

We now define the partition of the circle that will satisfy the conclusion of Theorem~\ref{thm:circle-raimi1}. The construction is motivated by the need to balance two competing requirements: on one hand, we want the partition elements to have substantial measure so that intersections can be detected; on the other hand, we need the intervals to be arranged so that small rotations can shift mass from one partition element to the next without completely evacuating the previous elements.

The solution is to use a \emph{geometrically decreasing} partition: each successive interval $E_i$ has measure smaller than the previous one by a factor of $k$, where $k$ is chosen large enough to absorb the measure losses incurred during the iterative rotation process.


Throughout this section, we identify $C$ with the interval $[0,1)$ equipped with Lebesgue measure, which we still denote by $\mu_1$.

Fix integers $r$, $t\ge 2$. Set
\[
k \ge 1+2^{r+4}t,
\qquad
S_k := 1 + \frac{1}{k} + \dots + \frac{1}{k^{r-1}}.
\]
Define
\[
\Delta_1 := \frac{1}{S_k}, \qquad
\Delta_i := \frac{\Delta_{i-1}}{k} \quad\text{for } 2\le i\le r.
\]
Then
\[
\sum_{i=1}^r \Delta_i
= \Delta_1\left(1 + \frac{1}{k} + \cdots + \frac{1}{k^{r-1}}\right)
= \frac{1}{S_k}\cdot S_k = 1.
\]
We define intervals
\[
E_i := [u_i,u_{i+1}), \qquad u_i := \sum_{j=1}^{i-1}\Delta_j,
\]
for $2\le i\le r$. Then the intervals $E_1,\dots,E_r$ are pairwise disjoint and form a partition of $[0,1)$, i.e, 
\[
[0,1) = \bigsqcup_{i=1}^r E_i.
\]

\begin{proof}[Proof of Theorem~\ref{thm:circle-raimi1}]
Throughout the proof we identify $C$ with the interval $[0,1)$ equipped with Lebesgue measure, still denoted by $\mu_1$.

\medskip\noindent

Suppose we are given a finite measurable cover
\[
C \subset F_1 \cup \cdots \cup F_t.
\]
By the assumptions on $C$, monotonicity, and additivity of $\mu_1$, we have
\[
1 = \mu_1(C) \le \sum_{j=1}^t \mu_1(F_j),
\]
so there exists $m\in\{1,\dots,t\}$ with
\[
\mu_1(F_m) \ge \frac{1}{t} =: \beta.
\]

Apply {\it (ii)} of Lemma~\ref{lem:single-set} to the sets $E_1$ and $F_m$. Then there exists $\theta_1\in C$ such that
\begin{equation}\label{eq:phi1-E1}
\mu_1\big(R_{\theta_1}(F_m)\cap E_1\big)
= \mu_1(E_1)\mu_1(F_m)
\ge \beta\,\mu_1(E_1)
= \beta\,\Delta_1.
\end{equation}
We now begin a recursive construction of rotations
\[
\theta_1,\theta_2,\dots,\theta_r\in C,
\qquad
\phi_s := \theta_1+\cdots+\theta_s\quad(1\le s\le r),
\]
with the following properties.

\begin{claim}\label{cl:Es-lower}
For each $1\le s\le r,$ we can choose $\theta_1,\dots,\theta_s$ so that
\begin{equation}\label{eq:Es-lower-phi-s}
\mu_1\big(R_{\phi_s}(F_m)\cap E_s\big)
\;\ge\; \frac{\beta}{2^{s+2}}\,\mu_1(E_s)
= \frac{\beta}{2^{s+2}}\,\Delta_s,
\end{equation}
and, for $1\le s\le r-1$,
\begin{equation}\label{eq:Es-lower-phi-s+1}
\mu_1\big(R_{\phi_{s+1}}(F_m)\cap E_s\big)
\;\ge\; \frac{\beta}{2^{s+4}}\,\mu_1(E_s)
= \frac{\beta}{2^{s+4}}\,\Delta_s.
\end{equation}
\end{claim}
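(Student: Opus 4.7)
I would proceed by induction on $s$. The base case $s=1$ is immediate: \eqref{eq:phi1-E1} already yields $\mu_1(R_{\phi_1}(F_m)\cap E_1)\ge \beta\Delta_1 \ge (\beta/2^{3})\Delta_1$, which is \eqref{eq:Es-lower-phi-s} at $s=1$. The companion inequality \eqref{eq:Es-lower-phi-s+1} at $s=1$ involves $\theta_2$ and will be produced during the inductive step.

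For the inductive step, suppose $\theta_1,\dots,\theta_s$ have been chosen so that \eqref{eq:Es-lower-phi-s} holds at stage $s$. Write $A:=R_{\phi_s}(F_m)$ and $B:=A\cap E_s$, so $\mu_1(B)\ge (\beta/2^{s+2})\Delta_s$. We must select $\theta_{s+1}$ so that both $\mu_1(R_{\theta_{s+1}}(A)\cap E_s)\ge (\beta/2^{s+4})\Delta_s$ and $\mu_1(R_{\theta_{s+1}}(A)\cap E_{s+1})\ge (\beta/2^{s+3})\Delta_{s+1}$. I would study the two continuous functions $f_0(\theta):=\mu_1(R_\theta(A)\cap E_s)$ and $f_1(\theta):=\mu_1(R_\theta(A)\cap E_{s+1})$. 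Using $f_i(\theta)=\mu_1(A\cap(E_{s+i}-\theta))$ together with the elementary fact that the symmetric difference of two translates of an interval has measure at most twice the translation length, both $f_0$ and $f_1$ are $2$-Lipschitz. Since $f_0(0)=\mu_1(B)\ge (\beta/2^{s+2})\Delta_s$, the Lipschitz bound forces $f_0(\theta)\ge (\beta/2^{s+4})\Delta_s$ throughout the window $I:=[0,L]$ with $L:=(3\beta/2^{s+5})\Delta_s$. Thus the first required inequality on $\theta_{s+1}$ is automatic as soon as we arrange $\theta_{s+1}\in I$.

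The crux is then to locate $\theta_{s+1}\in I$ with $f_1(\theta_{s+1})\ge (\beta/2^{s+3})\Delta_{s+1}$. Applying Lemma~\ref{lem:single-set}(ii) to $B$ and $E_{s+1}$ gives $\int_C\mu_1(R_\theta(B)\cap E_{s+1})\,d\mu_1(\theta)=\mu_1(B)\Delta_{s+1}$, and because $B\subseteq E_s$ while $E_{s+1}$ is the adjacent interval, the support of the integrand is contained in the short arc $(0,\Delta_s+\Delta_{s+1})\pmod 1$, a set of measure at most $2\Delta_s$. A Fubini-type computation then bounds the integral of $f_1$ restricted to $I$ from below, and the calibration $k\ge 1+2^{r+4}t$---which ensures $L$ is comparable to (indeed several multiples of) $\Delta_{s+1}$---is what lets one conclude that the average of $f_1$ over $I$ exceeds $(\beta/2^{s+3})\Delta_{s+1}$, producing the required $\theta_{s+1}\in I$. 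Since $R_{\theta_{s+1}}(B)\subseteq R_{\theta_{s+1}}(A)$, the second target inequality then follows.

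The main obstacle I anticipate is this final localized averaging: the global identity only controls $\int_C f_1$, not its mass in the narrow window $I$, and a priori the ``good'' shifts for $E_{s+1}$ might concentrate away from zero. Closing this gap is likely to require a case split depending on whether $B$ has significant mass near the right endpoint $u_{s+1}$---in which case small shifts suffice and the Fubini estimate works directly---or is concentrated elsewhere in $E_s$, in which case a larger shift (still within the support $(0,\Delta_s+\Delta_{s+1})$) must be allowed and the companion inequality \eqref{eq:Es-lower-phi-s+1} at stage $s-1$ (providing positive mass of $A$ in $E_{s-1}$) must be invoked to compensate for the loss in $E_s$. The delicate bookkeeping of the factors $2^{s+2}, 2^{s+3}, 2^{s+4}, 2^{s+5}$ and the geometric decay $\Delta_{s+1}=\Delta_s/k$ is precisely what keeps the induction closed across all $r$ stages.
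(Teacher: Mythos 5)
Your proposal takes a genuinely different route from the paper, and it contains a gap that you partially acknowledge but do not resolve.

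The paper's argument is combinatorial: decompose $E_s$ into $k$ subintervals of length $\Delta_{s+1}$, use pigeonhole to find a subinterval carrying a positive fraction of the mass of $R_{\phi_s}(F_m)\cap E_s$, and crucially pick the \emph{rightmost} such subinterval $J_{j,s}$. This simultaneously achieves two things: (a) shifting $J_{j,s}$ onto $E_{s+1}$ injects the desired mass into $E_{s+1}$, and (b) because $j$ is rightmost, the tail $J_{j+1,s},\dots,J_{k,s}$ is \emph{light} (each carries less than $\frac{\beta}{2^{s+3}}|J_{i,s}|$ of mass), so the only mass of $R_{\phi_s}(F_m)\cap E_s$ evicted from $E_s$ by the shift $\theta_{s+1}$ is the light tail plus the single block $J_{j,s}$ itself. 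This is the structural fact that makes the bookkeeping close.

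Your proposal replaces this with a Lipschitz bound and an averaging argument. The Lipschitz step is fine: $f_0$ is $2$-Lipschitz, so choosing $\theta_{s+1}$ in the window $I=[0,L]$ with $L\sim\beta\Delta_s/2^{s+5}$ preserves the $E_s$-mass. The averaging identity $\int_C\mu_1(R_\theta(B)\cap E_{s+1})\,d\theta=\mu_1(B)\Delta_{s+1}$ is also correct, and since the integrand is supported on $[0,\Delta_s+\Delta_{s+1})$, one gets a $\theta$ in that window achieving the required $E_{s+1}$-bound. But those two windows are incompatible: $L\sim\beta\Delta_s/2^{s+5}\ll\Delta_s$, while the good $\theta$ supplied by the averaging argument can live anywhere in $[0,\Delta_s+\Delta_{s+1})$. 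If the mass of $B=R_{\phi_s}(F_m)\cap E_s$ is concentrated near the left end of $E_s$, the only $\theta$'s putting mass into $E_{s+1}$ are of size nearly $\Delta_s$, far outside $I$, and then the Lipschitz bound gives no control on $f_0(\theta)$. Your proposed repair---invoking the companion inequality at stage $s-1$ to bring mass from $E_{s-1}$ into $E_s$ after a large shift---does not close this: the mass in $E_{s-1}$ is spread over an interval of length $\Delta_{s-1}=k\Delta_s$, and after a shift $\theta\le\Delta_s+\Delta_{s+1}$ only a strip of length at most $\theta$ of $E_{s-1}$ can land in $E_s$, with no lower bound on how much of the $E_{s-1}$-mass lies in that thin strip. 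The missing ingredient is precisely the pigeonhole-plus-rightmost-subinterval device, which gives quantitative information about \emph{where} inside $E_s$ the mass sits and why the evicted mass is small.
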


We first verify the claim for $s=1$, and then show how to pass from $s$ to $s+1$.

\medskip\noindent
{\bf Base case ($s=1$).}  It follows from \eqref{eq:phi1-E1} with $\phi_1 := \theta_1$ that \eqref{eq:Es-lower-phi-s} holds for $s=1$.

Next we construct $\theta_2$ and obtain the lower bound \eqref{eq:Es-lower-phi-s+1} for $s=1$. We decompose $E_1$ into $k$ pairwise disjoint subintervals of equal length, each satisfying $|I_i|=\frac{\Delta_1}{k}=\Delta_2,$ i.e.,
\[
E_1 = \bigsqcup_{i=1}^k I_i,
\qquad
I_i := \Big[\frac{(i-1)\Delta_1}{k},\,\frac{i\Delta_1}{k}\Big),
\quad 1\le i\le k.
\] 
By the pigeonhole principle and \eqref{eq:phi1-E1}, there exists $i$ such that
\[
\mu_1\big(R_{\theta_1}(F_m) \cap I_i\big)
\ge \frac{1}{k}\,\mu_1\big(R_{\theta_1}(F_m)\cap E_1\big)
\ge \frac{\beta}{k}\,\Delta_1.
\]
In particular, since $\frac{\beta}{k}\Delta_1\ge \frac{\beta}{2}|I_i|$, we may choose $j\in\{1,\dots,k\}$ such that
\begin{equation}\label{eq:Ij-large-1}
\mu_1\big(R_{\theta_1}(F_m) \cap I_j\big)
\ge \frac{\beta}{2}\,|I_j|
= \frac{\beta}{2}\,\Delta_2.
\end{equation}
Among all such indices $j$, pick the \emph{rightmost} one, i.e., the maximal $j$ satisfying \eqref{eq:Ij-large-1}. Then for every $i>j$ we have
\[
\mu_1\big(R_{\theta_1}(F_m) \cap I_i\big)
< \frac{\beta}{2}\,|I_i|
= \frac{\beta}{2}\,\Delta_2,
\]
and hence
\begin{equation}\label{eq:tail-mass-E1}
\sum_{i=j+1}^k \mu_1\big(R_{\theta_1}(F_m) \cap I_i\big)
\le \frac{\beta}{2}\sum_{i=j+1}^k |I_i|
\le \frac{\beta}{2}\,\Delta_1.
\end{equation}

Let $I_j=[a_j,b_j)$ and set
\[
\theta_2 := u_2 - a_j = \Delta_1 - a_j.
\]
Then $0<\theta_2\le \Delta_1$ and $I_j+\theta_2 = E_2$. Moreover, for $i<j$ the intervals $I_i+\theta_2$ are still contained in $E_1=[0,\Delta_1)$: their right endpoints are at most
\[
\frac{(j-1)\Delta_1}{k} + \theta_2
= \frac{(j-1)\Delta_1}{k} + \Delta_1 - a_j
\le \Delta_1.
\]

Define $\phi_2 := \theta_1 + \theta_2$. Using \eqref{eq:Ij-large-1}, we obtain
\[
\mu_1\big(R_{\phi_2}(F_m)\cap E_2\big)
\ge \mu_1\big(R_{\theta_1}(F_m)\cap I_j\big)
\ge \frac{\beta}{2}\,\Delta_2
\ge \frac{\beta}{2^{3}}\,\Delta_2,
\]
so \eqref{eq:Es-lower-phi-s} holds for $s=2$.

Next we estimate the intersection with $E_1$ after the second rotation. Any point of $R_{\theta_1}(F_m)\cap E_1$ that leaves $E_1$ under the shift by $\theta_2$ must belong to $I_j$ or to one of the intervals $I_{j+1},\dots,I_k$. Therefore,
\[
\mu_1\big(R_{\phi_2}(F_m)\cap E_1\big)
\ge \mu_1\big(R_{\theta_1}(F_m)\cap E_1\big)
- \mu_1\big(R_{\theta_1}(F_m)\cap I_j\big)
- \sum_{i=j+1}^k \mu_1\big(R_{\theta_1}(F_m)\cap I_i\big).
\]
Combining this with \eqref{eq:phi1-E1} and \eqref{eq:tail-mass-E1}, we get
\[
\mu_1\big(R_{\phi_2}(F_m)\cap E_1\big)
\ge \beta\Delta_1 -\Delta_2 - \frac{\beta}{2}\,\Delta_1\ge \frac{\beta\Delta_1}{4},
\]
where we have used $\Delta_2=\frac{\Delta_1}{k}$ and $k>1+2^{r+4} \cdot \frac{1}{\beta}$ in the last inequality. In particular,
\[
\mu_1\big(R_{\phi_2}(F_m)\cap E_1\big)
\ge \frac{\beta}{2^{4}}\,\Delta_1,
\]
so \eqref{eq:Es-lower-phi-s+1} holds for $s=1$. 
This verifies Claim~\ref{cl:Es-lower} for $s=1$.

\medskip\noindent
{\bf General case: $s\to s+1$.}
Assume $1\le s\le r-1$ and that rotations $\theta_1,\dots,\theta_s$ have already been chosen so that \eqref{eq:Es-lower-phi-s} holds. Set $\phi_s:=\theta_1+\cdots+\theta_s$. We now construct $\theta_{s+1}$ and $\phi_{s+1}:=\phi_s+\theta_{s+1}$ and show that \eqref{eq:Es-lower-phi-s} holds for $s+1$, and that \eqref{eq:Es-lower-phi-s+1} holds for this $s$.

Partition $E_s$ into $k$ equal pairwise disjoint subintervals $J_{i,s}$ with length $|J_{i,s}| = \frac{\Delta_s}{k} = \Delta_{s+1}$:
\[
E_s = \bigsqcup_{i=1}^k J_{i,s},
\qquad
J_{i,s}
:= \Big[u_s + \frac{(i-1)\Delta_s}{k},\, u_s + \frac{i\Delta_s}{k}\Big).
\quad 1\le i\le k,
\]
By the pigeonhole principle and \eqref{eq:Es-lower-phi-s}, there exists $i$ such that
\[
\mu_1\big( R_{\phi_s}(F_m) \cap J_{i,s}\big)
\ge \frac{1}{k}\,\mu_1\big(R_{\phi_s}(F_m)\cap E_s\big)
\ge \frac{1}{k}\cdot\frac{\beta}{2^{s+2}}\,\Delta_s
= \frac{\beta}{2^{s+2}}\,\Delta_{s+1}.
\]
In particular, since $\frac{\beta}{2^{s+2}}\ge \frac{\beta}{2^{s+3}}$, we may choose $j\in\{1,\dots,k\}$ such that
\begin{equation}\label{eq:Jjs-large}
\mu_1\big(R_{\phi_s}(F_m) \cap J_{j,s}\big)
\ge \frac{\beta}{2^{s+3}}\,\Delta_{s+1}.
\end{equation}
Among all such indices $j$, pick the \emph{rightmost} one, i.e., the maximal $j$ satisfying \eqref{eq:Jjs-large}. Then for every $i>j$ we have
\[
\mu_1\big(J_{i,s}\cap R_{\phi_s}(F_m)\big)
< \frac{\beta}{2^{s+3}}\,|J_{i,s}|
= \frac{\beta}{2^{s+3}}\,\Delta_{s+1},
\]
and hence
\begin{equation}\label{eq:tail-mass-Es}
\sum_{i=j+1}^k \mu_1\big(J_{i,s}\cap R_{\phi_s}(F_m)\big)
\le \frac{\beta}{2^{s+3}}\sum_{i=j+1}^k |J_{i,s}|
\le \frac{\beta}{2^{s+3}}\,\Delta_s.
\end{equation}

Let $J_{j,s}=[c_{j,s},d_{j,s})$ and define $\theta_{s+1} := u_{s+1}-c_{j,s}$.
Then $0<\theta_{s+1}\le \Delta_s$ and
\[
J_{j,s}+\theta_{s+1} = E_{s+1}.
\]

A simple geometric observation shows that, for all $i < j$, the intervals $J_{i,s} + \theta_{s+1}$ remain inside $E_s$: their right endpoints do not exceed $u_{s+1}$, and their left endpoints are at least $u_s$. From the definition of $\theta_{s+1}$ and \eqref{eq:Jjs-large}, we get
\[
\mu_1\big(R_{\phi_{s+1}}(F_m)\cap E_{s+1}\big)
\ge \mu_1\big(R_{\phi_s}(F_m)\cap J_{j,s}\big)
\ge \frac{\beta}{2^{s+3}}\,\Delta_{s+1}
= \frac{\beta}{2^{(s+1)+2}}\,\Delta_{s+1}.
\]
This is precisely \eqref{eq:Es-lower-phi-s} with $s+1$ in place of $s$.

\medskip\noindent
We prove \eqref{eq:Es-lower-phi-s+1} as below. In fact, any point of $R_{\phi_s}(F_m)\cap E_s$ that leaves $E_s$ under the shift by $\theta_{s+1}$ must belong to $J_{j,s}$ or to one of the intervals $J_{j+1,s},\dots,J_{k,s}$. Therefore,
\[
\mu_1\big(R_{\phi_{s+1}}(F_m)\cap E_s\big)
\ge \mu_1\big(R_{\phi_s}(F_m)\cap E_s\big)
- \mu_1\big(R_{\phi_s}(F_m)\cap J_{j,s}\big)
- \sum_{i=j+1}^k \mu_1\big(R_{\phi_s}(F_m)\cap J_{i,s}\big).
\]
Using \eqref{eq:Es-lower-phi-s}, the trivial bound $\mu_1(R_{\phi_s}(F_m)\cap J_{j,s})\le |J_{j,s}|=\Delta_{s+1}=\frac{\Delta_s}{k}$, \eqref{eq:tail-mass-Es}, and $\frac{1}{k} \leq \frac{\beta}{2^{s+5}}$ which follows from $k>1+2^{r+4}t$, we obtain
\[
\mu_1\big(R_{\phi_{s+1}}(F_m)\cap E_s\big)
\ge \frac{\beta}{2^{s+2}}\,\Delta_s - \frac{\Delta_s}{k} - \frac{\beta}{2^{s+3}}\,\Delta_s
\ge \frac{\beta}{2^{s+4}}\,\Delta_s,
\]
which is \eqref{eq:Es-lower-phi-s+1}. This completes the proof of Claim~\ref{cl:Es-lower} by induction on $s$.

\medskip\noindent
To complete the proof of Theorem \ref{thm:circle-raimi1}, it remains to show that
\[
\mu_1\big(R_{\phi_r}(F_m)\cap E_i\big) > 0
\qquad\text{for all }1\le i\le r.
\]
Applying Claim~\ref{cl:Es-lower} with $s=r$, we obtain a rotation $\phi_r = \theta_1+\cdots+\theta_r$ such that
\[
\mu_1\big(R_{\phi_r}(F_m)\cap E_r\big)
\ge \frac{\beta}{2^{r+2}}\,\mu_1(E_r) > 0.
\]
Thus, it suffices to show that for each $1\le s\le r-1$,
\begin{equation}\label{finalequ}
\mu_1\big(R_{\phi_r}(F_m)\cap E_s\big)
\ge \frac{\beta}{2^{s+5}}\,\mu_1(E_s) > 0.\end{equation}
To verify the estimate \eqref{finalequ}, we first note that
\begin{align}\label{HKeq}
\mu_1\big(R_{\phi_r}(F_m)\cap E_s\big)
&\ge \mu_1\big(R_{\phi_{s+1}}(F_m)\cap E_s\big)-\sum_{i=s+2}^r\theta_i \nonumber\\ 
&\ge \frac{\beta}{2^{s+4}}\,\mu_1(E_s) -\sum_{i=s+2}^r\theta_i,
\end{align}
where  the last inequality follows by using  the second part \eqref{eq:Es-lower-phi-s+1} of Claim~\ref{cl:Es-lower}.  

From the fact that $k \ge 1+2^{r+4}t \ge 1+\frac{2^{s+5}}{\beta}$, we obtain the estimate
\begin{align*}
\sum_{i=s+2}^r\theta_i &\le \Delta_{s+1}+\cdots+\Delta_{r-1} \le \Delta_{s+1}\left(1+\frac{1}{k}+\cdots+\frac{1}{k^{r-s}}\right)\\
& \le \frac{k\Delta_{s+1}}{k-1}\le \frac{\Delta_s}{k-1}=\frac{\mu_1(E_s)}{k-1}\le \frac{\beta}{2^{s+5}}\mu_1(E_s).
\end{align*}
Combining this estimate with \eqref{HKeq}, we obtain that
\[
\mu_1\big(R_{\phi_r}(F_m)\cap E_s\big) > \frac{\beta}{2^{s+4}}\,\mu_1(E_s)- \frac{\beta}{2^{s+5}}\,\mu_1(E_s)=\frac{\beta}{2^{s+5}} \,\mu_1(E_s)>0.
\]

In conclusion, we get
\[
\mu_1\big(R_{\theta}(F_m)\cap E_i\big) > 0
\qquad\text{for all }1\le i\le r,
\]
where $\theta=\phi_r$.

Thus, for the fixed partition $\{E_i\}_{i=1}^r$ and for any finite cover $C\subset F_1\cup\cdots\cup F_t$, we have found $m\in\{1,\dots,t\}$ and a rotation $\theta=\phi_r\in C$ such that
\[
\mu_1\big(R_{\theta}(F_m)\cap E_i\big) > 0
\qquad\text{for all }1\le i\le r.
\]
This completes the proof of Theorem~\ref{thm:circle-raimi1}.
\end{proof}

\section{Proof of Theorem \texorpdfstring{\ref{thm:general-case}}{1.5}}\label{section3}

In this section, we provide the proof of Theorem \ref{thm:general-case}, which extends the one-dimensional result (Theorem~\ref{thm:circle-raimi1}) to the $n$-dimensional torus $\mathbb{T}^n$. The proof employs a \emph{slicing argument} that reduces the higher-dimensional problem to the circle case, adapting the approach of Hegyv\'ari, Pach, and Pham~\cite{HPP} for general abelian groups in the finite setting.

Now we restate and prove Theorem \ref{thm:general-case} as follows.
\begin{theorem}\label{theorem-restate-general}
For all integers $r, t \geq 2$, there exists a measurable partition $\{E_i\}_{i=1}^{r}$ of the $n$-dimensional torus $\mathbb{T}^n$ with the following property: given any finite measurable cover $\{F_j\}_{j=1}^{t}$ of $\mathbb{T}^n$, one can find an index $m \in \{1, \ldots, t\}$ and a translation $R_{\theta}$ such that
\[
\mu_n\bigl(R_{\theta}(F_m)\cap E_i\bigr) > 0 \quad \text{for every } i \in \{1, \ldots, r\}.
\]
\end{theorem}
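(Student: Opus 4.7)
The plan is to lift the partition from Theorem~\ref{thm:circle-raimi1} to the product $\mathbb{T}^n = C \times \mathbb{T}^{n-1}$ by a cylinder construction, and then reduce any cover $\{F_j\}$ to its one-dimensional slices via Fubini. Concretely, let $\{E_i^C\}_{i=1}^r$ be the partition of $C$ built in Section~\ref{section2}, and set $E_i := E_i^C \times \mathbb{T}^{n-1}$. For any $\theta_1 \in C$ and any measurable $F_m \subset \mathbb{T}^n$, a direct Fubini computation gives
\[
\mu_n\big(R_{(\theta_1,0)}(F_m)\cap E_i\big)
= \int_{\mathbb{T}^{n-1}} \mu_1\big(R_{\theta_1}(F_m^y)\cap E_i^C\big)\,d\mu_{n-1}(y),
\]
where $F_m^y := \{x\in C : (x,y)\in F_m\}$. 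So it is enough to produce $m\in\{1,\ldots,t\}$ and $\theta_1\in C$ for which, for every $i$, the integrand above is strictly positive on a $\mu_{n-1}$-positive set of $y$'s; the translation in the $\mathbb{T}^{n-1}$ direction can then be taken to be $0$.

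For almost every $y$ the slices $\{F_j^y\}_{j=1}^t$ form a measurable cover of $C$, so Theorem~\ref{thm:circle-raimi1} furnishes some pair $(m,\theta_1)\in\{1,\ldots,t\}\times C$ making $\mu_1(R_{\theta_1}(F_m^y)\cap E_i^C)>0$ for every $i$. The key observation is that, for fixed $y$ and $m$, part~(i) of Lemma~\ref{lem:single-set} makes each map $\theta_1\mapsto\mu_1(R_{\theta_1}(F_m^y)\cap E_i^C)$ continuous on $C$, so the set
\[
U^{m,y}:=\bigcap_{i=1}^r\bigl\{\theta_1\in C : \mu_1(R_{\theta_1}(F_m^y)\cap E_i^C)>0\bigr\}
\]
is open in $C$. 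Whenever Theorem~\ref{thm:circle-raimi1} produces a witness on the slice at $y$, the set $U^{m,y}$ is nonempty for that value of $m$, and openness immediately upgrades nonemptiness to $\mu_1(U^{m,y})>0$. A finite pigeonhole over $m\in\{1,\ldots,t\}$ then selects one $m$ for which $A_m := \{y : \mu_1(U^{m,y})>0\}$ has positive $\mu_{n-1}$-measure.

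With this $m$ fixed, the final step is a Fubini pigeonhole in the variable $\theta_1$. Joint measurability of $(y,\theta_1)\mapsto \mathbf{1}\{\theta_1\in U^{m,y}\}$ follows because each $\mu_1(R_{\theta_1}(F_m^y)\cap E_i^C)$ is jointly measurable in $(y,\theta_1)$ when rewritten as an integral over $C$ of a jointly measurable indicator. Since $\int_{A_m}\mu_1(U^{m,y})\,d\mu_{n-1}(y)>0$, Fubini forces the existence of some $\theta_1\in C$ with $Z:=\{y\in A_m:\theta_1\in U^{m,y}\}$ of positive $\mu_{n-1}$-measure; for every such $y$ and every $i$ the integrand in the displayed Fubini identity is strictly positive, which yields $\mu_n(R_{(\theta_1,0)}(F_m)\cap E_i)>0$ for every $i$, as required. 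The main obstacle is conceptual rather than computational: promoting a pointwise-in-$y$ application of Theorem~\ref{thm:circle-raimi1} (whose witnessing rotation $\theta_1(y)$ may vary wildly with $y$) to a \emph{single} rotation $\theta_1$ valid on a positive-measure set of slices, which is precisely what the continuity-plus-openness upgrade followed by the Fubini pigeonhole accomplishes.
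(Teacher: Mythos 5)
Your proof is correct, but it takes a genuinely different route from the paper's. The paper slices the cover along the $C$-direction: for each $x\in C$ it looks at the $(n-1)$-dimensional fiber $A_m(x)=\{y:(x,y)\in F_m\}$, picks the index $m(x)$ whose fiber satisfies $\mu_{n-1}(A_{m(x)}(x))\ge 1/t$, and thereby builds a new measurable partition $\{C_m\}_{m=1}^t$ of the circle $C$ indexed by the same $\{1,\dots,t\}$. Applying Theorem~\ref{thm:circle-raimi1} \emph{once} to this induced partition produces the desired $(m^\ast,\theta_1)$, and a one-line Fubini computation (using $\mu_{n-1}(A_{m^\ast}(x-\theta_1))\ge 1/t$ on the relevant set) finishes the argument; this mirrors the Hegyv\'ari--Pach--Pham reduction for general abelian groups. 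Your argument instead slices along $\mathbb{T}^{n-1}$: you apply Theorem~\ref{thm:circle-raimi1} to the one-dimensional slices $\{F_j^y\}$ for a.e.\ $y$, which gives a $y$-dependent witness pair $(m(y),\theta_1(y))$, and then collapses the $y$-dependence using the continuity from Lemma~\ref{lem:single-set}(i) (so $U^{m,y}$ is open, hence nonempty implies positive measure), a finite pigeonhole to fix $m$, and a Fubini pigeonhole to fix $\theta_1$. Both proofs are sound; the paper's is shorter and avoids the joint-measurability bookkeeping your route requires (which you correctly address by rewriting $\mu_1(R_{\theta_1}(F_m^y)\cap E_i^C)$ as an integral of a jointly measurable indicator), while yours is perhaps more directly in the spirit of ``parallelize the circle theorem across slices and extract a common rotation'', and the openness observation is a genuinely clever way to upgrade pointwise existence to positive measure. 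One small stylistic note: your final step needs the standard fact that a nonnegative function strictly positive on a positive-measure set has positive integral; it is worth stating, though it is of course routine.
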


\begin{proof}
We write $\mathbb{T}^n = C \times \mathbb{T}^{\,n-1}$ and denote the zero element of $\mathbb{T}^{\,n-1}$ by $0_{\mathbb{T}^{n-1}}$. Fix $r,t \in \mathbb{N}$ and let $E_1,\dots,E_r$ be the measurable partition of $C$ constructed in Theorem~\ref{thm:circle-raimi1}. For each $i$, define
\[
    \bar{E}_i := E_i \times \mathbb{T}^{\,n-1} \subset \mathbb{T}^n.
\]
Then $\bigcup\limits_{i=1}^r \bar{E}_i$ is a measurable partition of $\mathbb{T}^n$. Given any measurable cover $\mathbb{T}^n \subset  \bigcup\limits_{m=1}^t F_m$, we will show that there exist $m^\ast \in \{1,\dots,t\}$ and $\theta \in \mathbb{T}^n$ such that
\[
    \mu_n\big( R_\theta(F_{m^\ast}) \cap \bar{E}_i \big) > 0
    \qquad \text{for all } i = 1,\dots,r.
\]
To establish this, fix $x \in C$ and $m \in \{1,\dots,t\}$ and set
\[
    A_m(x) := \{ y \in \mathbb{T}^{\,n-1} : (x,y) \in F_m \}.
\]
By Fubini’s theorem, for each fixed $m$, the set $A_m(x)$ is measurable for almost every $x \in C$, and the function $x \mapsto \mu_{n-1}(A_m(x))$ is measurable. Since $\mathbb{T}^n \subset \bigcup\limits_{m=1}^t F_m$, the sets $\{A_m(x)\}_{m=1}^t$ form a cover of $\mathbb{T}^{\,n-1}$ for each fixed $x$, and we have
\[
    \sum_{m=1}^t \mu_{n-1}(A_m(x)) \geq \mu_{n-1}(\mathbb{T}^{\,n-1})=1
    \qquad \text{for all } x \in C.
\]
By the pigeonhole principle, for each $x$ there exists at least one index $m_0$ such that
\[
    \mu_{n-1}(A_{m_0}(x)) \ge \frac{1}{t}.
\]
By choosing the minimal such index, select one such $m$ for each $x$ and denote it by $m(x)$. Define
\[
    C_m := \{ x \in C : m(x) = m \},
    \qquad m = 1,\dots,t.
\]
Then $\{C_m\}_{m=1}^t$ is a measurable partition of $C$.

Applying Theorem~\ref{thm:circle-raimi1} to the partition $\{C_m\}_{m=1}^t$ of $C$, we obtain an index $m^\ast \in \{1,\dots,t\}$ and a rotation $\theta_1 \in C$ such that
\begin{equation}\label{step1}
    \mu_{1}\big( R_{\theta_1}(C_{m^\ast}) \cap E_i \big) > 0
    \qquad \text{for all } i=1,\dots,r.
\end{equation}
We now use this to establish the corresponding property for $\mathbb{T}^n$.

Now set $\theta := (\theta_1, 0_{\mathbb{T}^{n-1}}) \in \mathbb{T}^n$. For any fixed $i$, by using the product structure of Haar measure on $\mathbb{T}^n = C \times \mathbb{T}^{\,n-1}$ and Fubini's theorem, the measure of $R_\theta(F_{m^\ast}) \cap \bar{E}_i$ decomposes as
\begin{align*}
    \mu_{n}\big( R_\theta(F_{m^\ast}) \cap \bar{E}_i \big)
    =& \int_{x \in E_i}
      \mu_{n-1}\big( \{ y \in \mathbb{T}^{\,n-1} :
    (x-\theta_1, y) \in F_{m^\ast} \} \big)\, d\mu_1(x) \\
    =& \int_{x \in E_i} \mu_{n-1}(A_{m^{\ast}}(x-\theta_1)) \, d\mu_1(x),
\end{align*}
where we have used $\{ y \in \mathbb{T}^{\,n-1} : (x-\theta_1, y) \in F_{m^\ast} \} =A_{m^{\ast}}(x-\theta_1)$ in the last equality.

Restricting the integral to those $x \in E_i$ such that
$x - \theta_1 \in C_{m^\ast}$ yields
\[
    \mu_{n}\big( R_{\theta}(F_{m^\ast}) \cap \bar{E}_i \big)
    \ge
    \int\limits_{\substack{x \in E_i \\ x - \theta_1 \in C_{m^\ast}}}
        \mu_{n-1}(A_{m^\ast}(x - \theta_1)) \, d\mu_1(x).
\]
But if $x - \theta_1 \in C_{m^\ast}$, by definition of $C_{m^\ast}$,
\[
    \mu_{n-1}(A_{m^\ast}(x - \theta_1))
    \ge \frac{1}{t}.
\]
Hence,
\[
    \mu_{n}\big( R_{\theta}(F_{m^\ast}) \cap \bar{E}_i \big)
    \ge
    \frac{1}{t}
    \cdot
    \mu_1\big( R_{\theta_1}(C_{m^\ast}) \cap E_i \big)
    > 0,
\]
where the positivity follows from \eqref{step1}. This holds for each $1 \leq i \leq r$, completing the proof of Theorem \ref{theorem-restate-general}.
\end{proof}

\section{Some remarks and open questions}\label{section4}

Our results raise several further questions. 

Firstly, Theorems~\ref{thm:circle-raimi1} and~\ref{thm:general-case} concern linear translations $x\mapsto x+\theta$ and do not incorporate polynomial parameters. It would be very interesting to obtain compact analogs of the polynomial extensions in \cite{HPP}. For example, given non-constant polynomials $P^{(1)},\dots,P^{(f)}\in\mathbb{Z}[x]$ with $P^{(j)}(0)=0$, does there exist a measurable partition $\mathbb{T}^n=\bigcup\limits_{i=1}^r E_i$ with the following property: for every finite measurable cover $\mathbb{T}^n \subset F_1 \cup \cdots \cup F_t$ there exist an index $m\in\{1,\dots,t\}$ and a set $H\subset\mathbb{N}$ of positive lower density such that, for every $h\in H$ and every $j\in\{1,\dots,f\}$,
\[
\mu_n\big(R_{P^{(j)}(h)\mathbf{1}_n}(F_m)\cap E_i\big) > 0
\quad\text{for all } 1\le i\le r?
\]

Secondly, our arguments are qualitative and do not provide sharp quantitative information. It is natural to ask whether one can obtain uniform lower bounds
\[
\mu_n\big(R_\theta(F_m)\cap E_i\big)\ge c(r,t,n)>0
\]
that are independent of the particular cover, or even determine the optimal dependence of such constants on $r$, $t$, and $n$.

Thirdly, do Raimi-type unavoidable partition phenomena persist for more general compact groups, such as compact Lie groups or profinite groups, under translation actions or other natural classes of measure-preserving transformations?

Another natural direction would be to investigate whether Raimi-type partition phenomena extend to the unit sphere $\mathbb{S}^{n-1} \subset \mathbb{R}^n$. Unlike the torus, the sphere does not possess a natural group structure, so one must first decide what class of transformations to consider. A natural question is: does there exist a measurable partition $\mathbb{S}^{n-1} = \bigcup\limits_{i=1}^r E_i$ such that for every finite measurable cover $\mathbb{S}^{n-1} \subset F_1 \cup \cdots \cup F_t$, there exist an index $m \in \{1, \ldots, t\}$ and a rotation $g \in \mathrm{SO}(n)$ satisfying $\mu(g(F_m) \cap E_i) > 0$ for all $i \in \{1, \ldots, r\}$, where $\mu$ denotes the normalized surface measure on $\mathbb{S}^{n-1}$? This question would connect Raimi-type results to the representation theory of compact Lie groups and spherical harmonic analysis, potentially revealing new rigidity phenomena in geometric measure theory.

Finally, it has been asked in~\cite{HPP} whether Theorem~\ref{thm3.1} can be applied to the Erd\H{o}s distance problem over finite fields. If $G = \mathbb{F}_q^d$, then Theorem~\ref{thm3.1} implies that one can find a partition
\[
\mathbb{F}_q^d = \bigcup_{i=1}^r E_i
\]
such that for each partition $\mathbb{F}_q^d = \bigcup\limits_{j=1}^t F_j$, there exist $m \in \{1, \ldots, t\}$ and an element $h \in \mathbb{F}_q^d$ such that
\[
\forall i \in \{1, 2, \ldots, r\}, \quad |(F_m + h) \cap E_i| \geq \alpha q^d.
\]
It is an interesting open problem to determine whether this structural result can be used to obtain better understanding of the distance set in $\mathbb{F}_q^d$. Recall that for $E \subset \mathbb{F}_q^d$, the distance between two vectors $x$ and $y$ is defined by $\|x - y\| = (x_1 - y_1)^2 + \cdots + (x_d - y_d)^2$, and the distance set is
\[
\Delta(E) := \{\|x - y\| : x, y \in E\}.
\]
The Erd\H{o}s-Falconer distance conjecture in the finite field setting~\cite{IR} asserts that if $d \geq 2$ is even and $|E| \gg q^{d/2}$, then the distance set $\Delta(E)$ covers a positive proportion of all elements in the field. This is the discrete analog of the celebrated Falconer distance conjecture in the continuous setting, which states that for any compact set $E\subset \mathbb{R}^d$ of Hausdorff dimension greater than $\frac{d}{2}$, the distance set $\Delta(E)$ is of positive Lebesgue measure. In the planar case $d=2$, the best currently known threshold in both settings is $5/4$, established in two influential works: Guth, Iosevich, Ou, and Wang~\cite{guth} in the Euclidean setting, and Murphy, Petridis, Pham, Rudnev, and Stevens~\cite{MPPRS} in the finite field setting.

In this direction, it is worth mentioning the related work of Pham and Yoo~\cite{PY}, in which they studied two-way connections between intersection-type problems and distance questions. In particular, they considered the following complementary questions:

\begin{question}
Let $A$ and $B$ be subsets of $\mathbb{F}_q^d$ and let $g \in O(d)$. Under what conditions on $A$, $B$, and $g$ does one have
\[
|A \cap (g(B) + z)| \geq \frac{|A||B|}{q^d}
\]
for almost every $z \in \mathbb{F}_q^d$, or even the stronger conclusion
\[
|A \cap (g(B) + z)| \sim \frac{|A||B|}{q^d}
\]
for almost every $z \in \mathbb{F}_q^d$?
\end{question}

\begin{question}
Let $P$ be a subset of $\mathbb{F}_q^{2d}$ and let $g \in O(d)$. Define
\[
S_g(P) := \{x - gy : (x, y) \in P\} \subset \mathbb{F}_q^d.
\]
Under what conditions on $P$ and $g$ does one have $|S_g(P)| \gg q^d$?
\end{question}

In the plane over prime fields, using a deep $L^2$ distance bound established in \cite{MPPRS}, they obtained strong results on the above two questions. As consequences, they most notably proved a rotational Erd\H{o}s-Falconer distance problem and resolved the prime field case of a question raised by Mattila~\cite{Mat23} in the continuous setting. We refer the reader to~\cite{PY} for detailed discussions.

It would be interesting to explore whether Theorems~\ref{thm:circle-raimi1} and~\ref{thm:general-case} can be applied to study other analogs of the distance problem, or whether the partition structures guaranteed by Raimi-type theorems can yield new insights into expansion phenomena in finite groups.

{\bf Acknowledgement.} 
D. Koh was supported by the National Research Foundation of Korea (NRF) grant funded by the Korea government (MSIT) (NO. RS-2023-00249597). D. T. Tran would like to thank the Vietnam Institute for Advanced Study in Mathematics (VIASM) for its hospitality and excellent working environment.

\end{document}